\newtheorem{theorem}{Theorem}[section]
\newtheorem{definition}{Definition}[section]
\newtheorem{lemma}{Lemma}[section]
\newtheorem{corollary}{Corollary}[section]
\newtheorem{remark}{Remark}[section]
\newcommand{\bea}{\begin{eqnarray}}
\newcommand{\eea}{\end{eqnarray}}
\newcommand{\eeas}{\end{eqnarray*}}
\newcommand{\beas}{\begin{eqnarray*}}
\begin{document}

\title[Pseudo parallel CR-submanifolds...]{Pseudo parallel CR-submanifolds in a non-flat complex space form}
\thanks{This work was supported in part by the UMRG research grant (Grant No. RG163-11AFR)}

\author{Avik De \and Tee-How Loo}
\address{De, A.\\
Institute of mathematical Sciences, University of Malaya, 50603 Kuala
Lumpur, Malaysia}
\email{de.math@gmail.com}
\address{Loo, T. H. \\
Institute of Mathematical Sciences, University of Malaya, 50603 Kuala Lumpur, Malaysia.}
\email{looth@um.edu.my}

\begin{abstract}
We classify pseudo parallel proper CR-submanifolds in a non-flat complex
space form with CR-dimension greater than one. With this result, the
non-existence of recurrent as well as semi parallel proper CR-submanifolds in a non-flat complex
space form with CR-dimension greater than one can also be obtained.
\end{abstract}

\maketitle

\setcounter{page}{1} \setlength{\unitlength}{1mm}\baselineskip .45cm %
\pagenumbering{arabic} \numberwithin{equation}{section}

\footnotetext{$\mathbf{2000\hspace{5pt}Mathematics\; Subject\;
Classification,\;}:53C40, 53C15$. \newline
{Key words and phrases : Complex space forms, CR-submanifolds, Pseudo parallel submanifolds. }}

%%%%%%%%%%%%%%%%%%%%%%%%%%%%%%%%%%%%%%%%%%%%%%%%%%%%%%%%%%%%%%%%%%%%%%%%%%%%%%%%%%%%%%%%%%%%%%%%%%%%%%%%%%%%

\section{Introduction}

Let $M$ be an isometrically immersed submanifold in a Riemannian manifold $%
\hat M$. Denote by $\langle,\rangle$ the metric tensor of $\hat M$ as well
as that induced on $M$. Then $M$ is said to be \emph{pseudo parallel} if its
second fundamental form $h$ satisfies the following condition 
\begin{equation*}
\bar R(X,Y)h=f((X\wedge Y)h) 
\end{equation*}
for all vectors $X,Y$ tangent to $M$,  where $f$, called the \emph{%
associated function}, is a smooth function on $M$, $\bar R$ is the curvature
tensor corresponding to the van der Waerden-Bortolotti connection $\bar\nabla$ and 
\begin{align*}
(X\wedge Y)Z&=\langle Y,Z\rangle X-\langle X,Z\rangle Y. 
\end{align*}
In particular, when the associated function $f=0$, $M$ is called a \emph{%
semi parallel} submanifold. It is called \emph{recurrent} if and only if $(\bar\nabla_Xh)(Y,Z)=\tau(X)h(Y,Z),$ where $\tau$ is a 1-form.
%Let $A_\sigma$ be the shape operator of $M$ with respect to a vector $\sigma$ normal to $M$. Then we have 
%\[
%< (R(X,Y)h)(Z,W),\sigma>=< (R(X,Y)A)_\sigma Z,W>
%\]
%for all vectors $X,Y,Z,W$ tangent to $M$ and $\sigma$ normal to $M$.

Pseudo parallel submanifolds is a generalization of semi parallel and
parallel submanifolds. Parallel submanifolds in a real space form was
completely classified in \cite{ferus}, \cite{takeuchi}. Semi parallel and
pseudo parallel submanifolds in a real space form were also studied
extensively by many researchers (cf. \cite{asperti1}, \cite{asperti2}, \cite%
{deprez}, \cite{dillen}, \cite{lobos}, \cite{lumiste}).

By $n$-dimensional complex space forms $\hat M_n(c)$, we mean complete and
simply connected $n$-dimensional Kaehler manifolds with constant holomorphic
sectional curvature $4c$. For each real number $c$, up to holomorphic
isometries, $\hat M_n(c)$ is a complex projective space $\mathbb{C}P_n$, a
complex Euclidean space $\mathbb{C}_n$ or a complex hyperbolic space $%
\mathbb{C}H_n$ depending on whether $c$ is positive, zero or negative,
respectively.

It is known that a parallel submanifold of a non-flat complex space form $\hat{M}_{n}(c)$, $c\neq 0$,  is either holomorphic or totally real (cf. \cite{chenogiue}). As a result, there does not
exist any parallel real hypersurface in $\hat{M}_{n}(c)$, $c\neq 0$.
Further, the non-existence of semi parallel real hypersurfaces in $\hat{M}%
_{n}(c)$, $c\neq 0$, $n\geq 2$, was proved by Ortega (cf. \cite{ortega}).
Nevertheless, there do exist pseudo parallel real hypersurfaces in $\hat{M}%
_{n}(c)$, $c\neq 0$. Indeed, Lobos and Ortega gave a classification of
pseudo parallel real hypersurfaces in $\hat{M}_{n}(c)$, $c\neq 0$, $n\geq 2$%
, as below: 
\begin{theorem}[\cite{lobos-ortega}]	\label{thm:pseudo-parallel-hypersurface}
Let $M$ be a connected pseudo parallel real hypersurface in $\hat M_n(c)$, $n\geq 2$, $c\neq0$, with associated function $f$. 
Then $f$ is constant and positive, and $M$ is an open part of one of the following real hypersurfaces:

\begin{enumerate}
\item[(a)]
For $c=-1<0$:
	\begin{enumerate}
	\item[(i)] A geodesic hypersphere of radius $r>0$  with $f=\coth^2r$.
	\item[(ii)] A tube of radius $r>0$ over $\mathbb CH_{n-1}$ with $f=\tanh^2r$.
	\item[(iii)] A horoshpere with $f=1$. 
	\end{enumerate}
\item[(b)] For $c=1>0$:
	\begin{enumerate}
	\item[(i)] A geodesic hypersphere of radius $r\in\:]0,\pi/2[$ with $f=\cot^2r$.
	\end{enumerate}
\end{enumerate}
\end{theorem}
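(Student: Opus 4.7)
The plan is to exploit the rigid almost contact metric structure a real hypersurface $M$ inherits from $\hat M_n(c)$. Writing $\xi=-JN$ for $N$ a local unit normal, $\phi$ for the tangential part of $J$, and $A$ for the shape operator, the second fundamental form reduces to $h(X,Y)=\langle AX,Y\rangle N$. The Gauss and Ricci equations for $\hat M_n(c)$ express the curvature $\bar R$ of the van der Waerden–Bortolotti connection entirely in terms of $A$, $\phi$, $\xi$, $\eta$ and $c$, so that the pseudo-parallel identity $\bar R(X,Y)h=f((X\wedge Y)h)$ becomes a purely algebraic tensor equation on $M$.

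First I would substitute vectors of three distinguished types into that identity: the Reeb field $\xi$, unit vectors in the holomorphic distribution $\mathcal{D}=\{\xi\}^{\perp}$ that are eigenvectors of $A$, and their $\phi$-images. Balancing the $\xi$-components of both sides and using tensoriality should force $A\xi=\alpha\xi$ for some smooth function $\alpha$, that is, $M$ is Hopf. This is where the hypothesis $c\neq 0$ enters crucially: the curvature term in the Gauss equation is what prevents the coefficient of the $A\xi$-contribution from collapsing.

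Once $M$ is Hopf, the classical Codazzi identity for real hypersurfaces of $\hat M_n(c)$ yields the standard relation $(2\lambda-\alpha)A\phi X=(\alpha\lambda+2c)\phi X$ on each $A$-eigenspace in $\mathcal D$. Reapplying the pseudo-parallel equation on these eigendistributions should then force $\alpha$, $f$ and all principal curvatures to be constant. Thus $M$ is a Hopf hypersurface with constant principal curvatures. I would finally invoke the Takagi list (for $c>0$) and the Montiel–Berndt list (for $c<0$) of such hypersurfaces, and substitute each model back into the pseudo-parallel equation. Most candidates are eliminated because their eigenvalue structure is incompatible with the algebraic identity; the surviving ones yield exactly the four families (a)(i)–(iii) and (b)(i), with $f=\coth^2 r$, $\tanh^2 r$, $1$ or $\cot^2 r$ computed directly from the eigenvalues, in particular positive.

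The main obstacle will be the Hopf reduction. The pseudo-parallel condition acts on the tensor $h$ rather than directly on $A$, so extracting an equation that pins down $A\xi$ requires several rounds of careful choices of $X,Y,Z,W$ among $\xi$, $A$-eigenvectors, and their $\phi$-images, and a clean use of $\phi^2=-I+\eta\otimes\xi$ and $\phi A+A\phi\neq 0$ in general. After this step the proof becomes a combination of Hopf-hypersurface identities and a case-by-case check against the Takagi–Montiel classification.
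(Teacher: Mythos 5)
The paper offers no proof of this statement at all: Theorem~\ref{thm:pseudo-parallel-hypersurface} is imported verbatim from \cite{lobos-ortega}, so the only internal point of comparison is the parallel argument the authors run in Sections 3 and 4 for CR-submanifolds of higher codimension. Measured against that (and against the cited source), your outline has the right skeleton --- reduce the pseudo-parallel identity to an algebraic condition on the shape operator via Gauss and Ricci (with $R^{\perp}=0$ for a hypersurface), prove that $M$ is Hopf, exploit the relation between $\lambda$ and $\lambda^{*}$, and classify --- but the two decisive steps are asserted rather than proved. The Hopf reduction is left as an acknowledged ``main obstacle'': stating that balancing $\xi$-components ``should force'' $A\xi=\alpha\xi$ is not an argument, and this is exactly where the content of the theorem lies. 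More seriously, constancy of $\alpha$ (and hence of $f$ and of the principal curvatures) is \emph{not} a pointwise algebraic consequence of ``reapplying the pseudo-parallel equation'': the curvature identity is tensorial and can only produce relations at each point. For Hopf hypersurfaces, constancy of $\alpha$ is a genuine theorem whose proof differentiates the Hopf condition through the Codazzi equation and must exclude a degenerate case --- compare Lemma~\ref{lem:alpha}(c), where the paper needs $d\eta\neq0$, i.e.\ the non-integrability of $\mathcal D$ supplied by Theorem~\ref{thm:mixed-foliate}, to conclude $d\alpha=0$. Your sketch contains no mechanism for this step.

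The endgame is also heavier than it needs to be. Once Hopf-ness is known, the cleanest algebraic output of pseudo-parallelism is obtained from $\langle(\bar R(X,Y)A)_{N}Y,X\rangle=f\langle[(X\wedge Y)A]_{N}Y,X\rangle$ for unit $A$-eigenvectors $X,Y$ with eigenvalues $\lambda\neq\mu$: it reads $(\mu-\lambda)(\langle R(X,Y)Y,X\rangle-f)=0$, so every plane spanned by eigenvectors with distinct principal curvatures has sectional curvature $f$. Feeding this into the Gauss equation and into your relation $(2\lambda-\alpha)\lambda^{*}=\alpha\lambda+2c$ collapses $A$ on the holomorphic distribution to a single eigenvalue $\lambda$ with $f=\lambda^{2}=\alpha\lambda+c>0$; that is, $M$ is totally $\eta$-umbilical, and the classical classification of totally $\eta$-umbilical real hypersurfaces in non-flat complex space forms yields exactly lists (a)(i)--(iii) and (b)(i) with the stated values of $f$, without ever invoking the full Takagi and Berndt lists of Hopf hypersurfaces with constant principal curvatures. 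This shorter route is precisely the structure the paper generalizes to higher CR-codimension in (\ref{eqn:h}) and (\ref{eqn:lambda}) together with Lemma~\ref{lem:alpha}, so if you intend to complete your proof you would do better to aim for the totally $\eta$-umbilical condition than for constancy of all principal curvatures.
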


Note that a real hypersurface in a Kaehler manifold is a CR-submanifold of
codimension one. A natural problem arisen is to generalize these known
results on real hypersurfaces in $\hat M_n(c)$ into the content of
CR-submanifolds. For technical reasons, certain additional restrictions such
as the semi-flatness assumptions on the normal curvature tensor (cf. \cite%
{yano-kon}), or restriction on the CR-codimension (cf. \cite{djoric}, \cite{loo}), have
been imposed while dealing with CR-submanifolds of higher codimension. It
would be interesting to see if any nice results on CR-submanifolds could be
obtained without these restrictions.

In this paper, we study pseudo parallel proper CR-submanifolds in $\hat{M}%
_{n}(c)$, $c\neq 0$, with none of the above mentioned restrictions. More
precisely, we prove the following: 
\begin{theorem}\label{thm:main}
Let $M$ be a connected proper CR-submanifold in $\hat M_{n}(c)$, $c\neq0$. Suppose that $\dim_{\mathbb C}\mathcal D=p\geq2$.
If $M$ is pseudo parallel with associated function $f$, then $f$ is a positive constant and 
$M$ is an open part of one of the following spaces:
\begin{enumerate}
\item[(a)]
For $c=-1<0$:
	\begin{enumerate}
	\item[(i)] A geodesic hypersphere in $\mathbb CH_{p+1}\subset\mathbb CH_n$ of radius $r>0$  with $f=\coth^2r$.
	\item[(ii)] A tube over $\mathbb CH_{p}$ in $\mathbb CH_{p+1}\subset\mathbb CH_n$ of radius $r>0$ with $f=\tanh^2r$.
	\item[(iii)] A horoshpere in $\mathbb CH_{p+1}\subset\mathbb CH_n$ with $f=1$. 
	\end{enumerate}
\item[(b)] For $c=1>0$:
	\begin{enumerate}
	\item[(i)] A geodesic hypersphere in $\mathbb CP_{p+1}\subset\mathbb CP_n$ of radius $r\in\:]0,\pi/2[$ with $f=\cot^2r$.
	\item[(ii)] An invariant submanifold in a geodesic hypersphere in $\mathbb CP_n$ of radius $r\in\:]0,\pi/2[$ with
				$f=\cot^2r$.
	\end{enumerate}
\end{enumerate}
\end{theorem}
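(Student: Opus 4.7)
The plan is to reduce Theorem \ref{thm:main} to the real hypersurface classification Theorem \ref{thm:pseudo-parallel-hypersurface} via a reduction-of-codimension argument. I begin with the standard CR-submanifold decomposition $TM=\mathcal{D}\oplus\mathcal{D}^{\perp}$ and $T^{\perp}M=F\mathcal{D}^{\perp}\oplus\nu$, where $FX$ is the normal component of $JX$, $PX$ its tangential component, and $\nu$ is the maximal $J$-invariant subbundle of the normal bundle. The ultimate objective is to show that $\dim_{\mathbb{R}}\mathcal{D}^{\perp}=1$ and that the first normal subbundle reduces to a parallel $J$-invariant subbundle, placing $M$ either as a real hypersurface of a totally geodesic $\hat{M}_{p+1}(c)\subset\hat{M}_n(c)$ or, when $c>0$, as an invariant submanifold of a geodesic hypersphere in $\mathbb{C}P_n$.

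I would next expand the pseudo parallel condition
\[
R^{\perp}(X,Y)h(Z,W)-h(R(X,Y)Z,W)-h(Z,R(X,Y)W)=f\bigl((X\wedge Y)\cdot h\bigr)(Z,W)
\]
via the Gauss, Codazzi, and Ricci equations together with the explicit curvature tensor of $\hat{M}_n(c)$, and then systematically evaluate this identity on combinations of vectors taken from $\mathcal{D}$ and $\mathcal{D}^{\perp}$. The hypothesis $p\geq2$ enters crucially here, since it allows the use of orthonormal holomorphic pairs $X,JX,Y,JY\in\mathcal{D}$, so that the $J$-dependent terms in the complex space form curvature produce nontrivial algebraic information. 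I anticipate that diagonal evaluations $Z=X$, $W=Y$ in $\mathcal{D}$, together with mixed evaluations involving $U,V\in\mathcal{D}^{\perp}$, will force $f$ to be a positive constant and give the shape operator along $F\mathcal{D}^{\perp}$ a Hopf-like form.

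The principal obstacle is to combine these algebraic identities into a proof that $\dim_{\mathbb{R}}\mathcal{D}^{\perp}=1$ and that the first normal space is contained in a parallel $J$-invariant subbundle of $T^{\perp}M$. The dimension bound on $\mathcal{D}^{\perp}$ is not automatic from the pseudo parallel hypothesis and will require a careful Ricci-identity analysis of $h(U,V)$ for $U,V\in\mathcal{D}^{\perp}$, together with the symmetry $h(PX,Y)=h(X,PY)$ for $X,Y\in\mathcal{D}$ coming from the Kaehler structure. I expect most of the technical bulk of the proof to be concentrated in this step.

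Once the structural reduction is in place, applying a reduction-of-codimension theorem realizes $M$ as a real hypersurface of a totally geodesic $\hat{M}_{p+1}(c)$, in which case Theorem \ref{thm:pseudo-parallel-hypersurface} directly produces items (a)(i)--(iii) and (b)(i). In the remaining possibility, available only for $c>0$, $M$ embeds as a proper invariant submanifold of a geodesic hypersphere of $\mathbb{C}P_n$ in the sense of the induced almost contact structure; compatibility of $h$ with the pseudo parallel equation inherited from the ambient hypersphere then pins down $f=\cot^2 r$ and yields case (b)(ii), completing the classification.
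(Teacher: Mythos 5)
Your outline reproduces the paper's architecture (show the CR-codimension is one, reduce to a real hypersurface of a totally geodesic $\hat M_{p+1}(c)$ or to an invariant submanifold of a geodesic hypersphere, then quote Theorem~\ref{thm:pseudo-parallel-hypersurface}), but every technically hard step is deferred with ``I anticipate'' or ``I expect,'' and the specific mechanisms the paper uses are absent. The first missing idea is the opening move: the paper takes the cyclic sum $\mathfrak{S}_{X,Y,Z}$ of the pseudo parallel identity, which kills the right-hand side outright because $\mathfrak{S}_{X,Y,Z}((X\wedge Y)h)(Z,W)=0$, yielding a single $f$-free algebraic identity (\ref{eqn:general}) from which $Ch(\mathcal D^\perp,TM)=0$, mixed-total geodesy, and the key relation $h(Y,\phi W)=\langle Y,\phi W\rangle H_{\mathcal D}+Ch(Y,W)$ all follow. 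Your plan to evaluate the full pseudo parallel equation on various frames keeps $f$ entangled in the computation and is far less tractable. The second missing idea is how $q=1$ is actually proved: it is not a ``Ricci-identity analysis'' but a contradiction argument --- if $q\geq2$ one forces $BH_{\mathcal D}=0$, hence $h(\mathcal D,\mathcal D)=0$, hence $M$ is mixed foliate, contradicting the Chen/Chen--Wu non-existence theorem (Theorem~\ref{thm:mixed-foliate}). That same external theorem is also what makes $\alpha=\langle h(\xi,\xi),N\rangle$ constant (Lemma~\ref{lem:alpha}), which in turn is what forces $f=\lambda^2$ to be a positive constant; nothing in your diagonal-evaluation heuristic delivers this.

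The third gap concerns case (b)(ii). You present it as another instance of codimension reduction, but it is not: the paper splits on $Ch=0$ versus $Ch\neq0$, and only the first branch admits reduction to a real hypersurface of $\hat M_{p+1}(c)$. In the branch $Ch\neq0$ one first shows $c=\langle Ch(Y,Y),Ch(Y,Y)\rangle>0$, so this case occurs only in $\mathbb{C}P_n$, and then one must prove that $M$ lies in a geodesic hypersphere by a genuinely geometric argument: the focal map $\Phi_r$ along $N=\lambda^{-1}H_{\mathcal D}$ collapses $M$ to a single point $z_0$ (computed via $M$-Jacobi fields using the two constant principal curvatures $\alpha=2\cot 2r$ and $\lambda=\cot r$ or $-\cot(\tfrac{\pi}{2}-r)$), and absence of conjugate points then places $M$ inside the geodesic hypersphere of radius $r$ about $z_0$ as an invariant submanifold. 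Your proposal contains no substitute for this focal-map step, and without it the case (b)(ii) of the classification cannot be reached.
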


From the above theorem, we see that the associated function $f$ is a
non-zero constant for pseudo parallel proper CR-submanifolds in $\hat M_n(c)$%
, $c\neq0$. Hence we have

\begin{corollary}\label{cor:semi-parallel}
There does not exist any semi parallel proper CR-submanifold $M$ in $\hat M_n(c)$, $c\neq0$, with 
$\dim_{\mathbb C}\mathcal D\geq2$.
\end{corollary}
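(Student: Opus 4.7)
The plan is to deduce the corollary as an immediate consequence of Theorem \ref{thm:main}. Recall from the introduction that a semi parallel submanifold is precisely a pseudo parallel submanifold whose associated function vanishes identically, i.e.\ $f\equiv 0$.

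Suppose, toward a contradiction, that $M$ is a semi parallel proper CR-submanifold of $\hat M_n(c)$ with $c\neq 0$ and $\dim_{\mathbb C}\mathcal D\geq 2$. Then $M$ is pseudo parallel with associated function $f\equiv 0$, so the hypotheses of Theorem \ref{thm:main} are satisfied on every connected component of $M$. Applying the theorem, the associated function must be a \emph{positive} constant, contradicting $f\equiv 0$. Hence no such submanifold can exist.

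There is essentially no obstacle here beyond having Theorem \ref{thm:main} in hand; the corollary is a clean logical consequence. The only point worth noting is the passage from $M$ to its connected components, so that the theorem can be applied; since the conclusion (non-existence) is local, the global statement for $M$ follows. No additional computation is required.
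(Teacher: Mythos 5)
Your proposal is correct and follows exactly the paper's own route: the corollary is stated as an immediate consequence of Theorem~\ref{thm:main}, since a semi parallel submanifold is pseudo parallel with $f\equiv 0$, while the theorem forces $f$ to be a positive constant. The remark about passing to connected components is a harmless extra precaution not made explicit in the paper.
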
This corollary generalizes the non-existence of semi parallel
real hypersurfaces in $\hat{M}_{n}(c)$, $c\neq 0$ (cf. \cite{ortega}) and
improves a result in \cite{kon}: There does not exist any semi parallel
proper CR-submanifold in $\hat{M}_{n}(c)$, $c\neq 0$, with semi-flat normal
connection.

By applying Corollary~\ref{cor:semi-parallel}, we can then prove the non-existence of proper recurrent CR-submanifolds
in $\hat M_n(c)$, $c\neq0$, with 
$\dim_{\mathbb C}\mathcal D\geq2$ (cf. Corollary~\ref{cor:recurrent}).

The paper is organized as follows:\\
In Section $2$, we fix some notations and
recall some basic material of CR-submanifolds in a Kaehler manifold which we
use later. A fundamental property of Hopf hypersurfaces in $\hat{M}_{n}(c)$, 
$c\neq 0$, is that the principal curvature $\alpha $ corresponding to the
Reeb vector field $\xi $ is constant. Moreover, the other principal
curvatures can be related to $\alpha $ by a nice formula (cf. \cite{ryan}).
We generalize these results to mixed-geodesic CR-submanifolds of maximal
CR-dimension in $\tilde{M}_{n}(c)$ in Section 3. In Section $4$ we
present the proof of Theorem~\ref{thm:main}. In the last section, recurrence and semi-parallelism have been discussed in the context of Riemannian vector bundles. We show that for any homomorphism of Riemannian vector bundles, recurrence directly implies semi-paralellism and thus conclude that there does not exist any proper recurrent CR-submanifold $M$ in $\tilde{M}_n(C)$, $c\neq0$, with $\dim_{\mathbb C}\mathcal D\geq2$
 (cf. Corollary~\ref{cor:recurrent}).

%%%%%%%%%%%%%%%%%%%%%%%%%%%%%%%%%%%%%%%%%%%%%%%%%%%%%%%%%%%%%%%%%%%%%%
%%%%%%%%%%%%%%%%%%%%%%%%%%%%%%%%%%%%%%%%%%%%%%%%%%%%%%%%%%%%%%%%%%%%%%

\section{CR-submanifolds in a Kaehler manifold}

\label{sec:cr} 
%In this section we shall recall some structural equations in the theory of CR-submanifolds in a Kaehler manifold and fix some notations. Some fundamental properties of CR-submanifolds in a Kaehler manifold are  also derived here.

Let $\hat M$ be a Riemannian manifold, and let $M$ be a connected Riemannian
manifold isometrically immersed in $\hat M$. For a vector bundle $\mathcal{V}
$ over $M$, we denote by $\Gamma(\mathcal{V})$ the $\Omega^0(M)$-module of
cross sections on $\mathcal{V}$, where $\Omega^k(M)$ denotes the space of $k$%
-forms on $M$.

Denote by $\langle ,\rangle $ the Riemannian metric of $\hat{M}$ and $M$ as
well, $h$ the second fundamental form and $A_{\sigma }$ the shape operator
of $M$ with respect to a vector $\sigma $ normal to $M$. Also, let $%
\nabla $ denote the Levi-Civita connection on the tangent bundle $TM$ of $M$
and $\nabla ^{\perp }$, the induced normal connection on the normal bundle $%
TM^{\perp }$ of $M$. The second fundamental form $h$ and the shape operator $%
A_{\sigma }$ of $M$ with respect to $\sigma \in \Gamma (TM^{\perp })$ is
related by the following equation 
\begin{equation*}
\langle h(X,Y),\sigma \rangle =\langle A_{\sigma }X,Y\rangle 
\end{equation*}%
for any $X,Y\in \Gamma (TM)$.

Let $R$ and $R^{\perp }$ be the curvature tensors associated with $\nabla $
and $\nabla ^{\perp }$ respectively. We denote by $\bar{\nabla}$ the van der
Waerden-Bortolotti connection and $\bar{R}$ its corresponding curvature
tensor. Then we have 
\beas
(\bar{R}(X,Y)A)_{\sigma }Z&=&R(X,Y)A_{\sigma }Z-A_{\sigma
}R(X,Y)Z-A_{R^{\perp }(X,Y)\sigma }Z, \\
(\bar{R}(X,Y)h)(Z,W)&=&R^{\perp }(X,Y)h(Z,W)-h(R(X,Y)Z,W)\\
&&-h(Z,R(X,Y)W),
\eeas
for any $X,Y,Z,W\in \Gamma (TM)$ and $\sigma \in \Gamma (TM^{\perp })$. It
can be verified that 
\begin{equation*}
\langle (\bar{R}(X,Y)h)(Z,W),\sigma \rangle =\langle (\bar{R}(X,Y)A)_{\sigma
}Z,W\rangle .
\end{equation*}

A submanifold $M$ is said to be \emph{pseudo parallel} if 
\begin{align*}
(\bar R(X,Y)h)(Z,W)=f[(X\wedge Y)h](Z,W)
\end{align*}
%or equivalently
%\begin{align*}%\label{eqn:pseudo-parallel}
%(\bar R(X,Y)A)_\sigma Z=f[(X\wedge Y)A]_\sigma Z
%\end{align*}
for any $X,Y,Z,W\in\Gamma(TM)$, % and $\sigma\in\Gamma(TM^\perp)$, 
where $f\in\Omega^0(M)$, is called the \emph{associated function}, and 
\begin{align*}
(X\wedge Y)Z &=\langle Y,Z\rangle X-\langle X,Z\rangle Y, \\
[(X\wedge Y)h](Z,W) &=-h((X\wedge Y)Z,W)-h(Z,(X\wedge Y)W), \\
[(X\wedge Y)A]_\sigma Z &=(X\wedge Y)A_\sigma Z-A_\sigma(X\wedge Y)Z.
\end{align*}
If the associated function $f=0$ then the submanifold $M$ is said to be 
\emph{semi parallel}.

Now, let $\hat{M}$ be a Kaehler manifold with complex structure $J$. 
%, and let $M$ be a connected orientable Riemannian manifold isometrically immersed in $\hat M$.
For any $X\in \Gamma (TM)$ and $\sigma \in \Gamma (TM^{\perp })$, we denote 
the tangential  (resp. normal) part of $JX$ and $J\sigma$ by $\phi X$  and $B\sigma$ (resp. $\omega X$ and $C\sigma$) respectively.
From the parallelism of $J$, we have (cf. \cite[pp. 77]{yano-kon}) 
\begin{align}
(\bar{\nabla}_{X}\phi )Y& =A_{\omega Y}X+Bh(X,Y)
\label{eqn:structure-general1} \\
(\bar{\nabla}_{X}\omega )Y& =-h(X,\phi Y)+Ch(X,Y)
\label{eqn:structure-general2}
\end{align}%
for any $X$, $Y\in \Gamma (TM)$. % and $\sigma\in\Gamma(TM^\perp)$.

The maximal $J$-invariant subspace $\mathcal{D}_x$ of the tangent space $T_xM
$, $x\in M$ is given by 
\begin{equation*}
\mathcal{D}_x=T_xM\cap JT_xM. 
\end{equation*}
\begin{definition}[\cite{chen4}]
A submanifold $M$ in a Kaehler manifold $\hat M$ is called a \emph{generic submanifold} if the dimension of $\mathcal D_x$ is constant along $M$.
The distribution $\mathcal D : x\rightarrow \mathcal D_x$, $x\in M$ is called the 
\emph{holomorphic distribution (or Levi distribution)} on $M$ and the complex dimension of $\mathcal D$ is called the CR-dimension of $M$.
\end{definition}

\begin{definition}[\cite{bejancu}] \label{def:cr}
A generic submanifold $M$ in a Kaehler manifold $\hat M$ is called  a \emph{CR-submanifold} if 
the orthogonal complementary distribution $\mathcal D^\perp$ of $\mathcal D$ in $TM$ is totally real,
i.e., $J\mathcal D^\perp\subset TM^\perp$.
The real dimension of $\mathcal D^\perp$ is called the CR-codimension of $M$.

If $\mathcal D^\perp=\{0\}$ (resp. $\mathcal D=\{0\}$), the CR-submanifold $M$ is said to be 
\emph{holomorphic} (resp. \emph{totally real}).
A CR-submanifold $M$ is said to be \emph{proper} if it is neither holomorphic nor totally real.
Let $\nu$ be the orthogonal complementary distribution of $J\mathcal D^\perp$ in $TM^\perp$.
Then an \emph{anti-holomorphic} submanifold $M$ is a CR-submanifold with $\nu=\{0\}$,
i.e., $J\mathcal D^\perp=TM^\perp$.
A \emph{real hypersurface} is a proper CR-submanifold of codimension one.
\end{definition}

For a local frame of orthonormal vectors $E_1,E_2,\cdots,E_{2p}$ in $\Gamma(%
\mathcal{D})$, where $p=\dim_{\mathbb{C}}\mathcal{D}$, we define the \emph{$%
\mathcal{D}$-mean curvature vector} $H_{\mathcal{D}}$ by 
\begin{equation*}
H_{\mathcal{D}}=\frac1{2p}\sum^{2p}_{j=1}h(E_j,E_j). 
\end{equation*}

\begin{lemma}[\cite{loo}]\label{lem:CH_D=0}
Let $M$ be a CR-submanifold in a Kaehler manifold $\hat M$. Then 
$\langle(\phi A_\sigma+A_\sigma\phi)X,Y\rangle=0$,
for any $X,Y\in\Gamma(\mathcal D)$ and $\sigma\in\Gamma(\nu)$. Moreover, we have $CH_{\mathcal D}=0$.
\end{lemma}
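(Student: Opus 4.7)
The plan is to deduce both assertions from the Kaehler structure equation (\ref{eqn:structure-general2}), together with a judicious choice of adapted frame on $\mathcal{D}$.

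\emph{First part.} Using the skew-symmetry of $\phi$ (inherited from that of $J$) and the relation $\langle A_\sigma X, Y\rangle = \langle h(X,Y),\sigma\rangle$, I would rewrite
\[
\langle (\phi A_\sigma + A_\sigma \phi) X, Y\rangle = \langle h(\phi X, Y) - h(X,\phi Y), \sigma\rangle,
\]
so that it suffices to prove $h(\phi X, Y) - h(X, \phi Y) \in \Gamma(J\mathcal{D}^\perp)$ whenever $X, Y \in \Gamma(\mathcal{D})$. For this, I would apply (\ref{eqn:structure-general2}) with $Y \in \mathcal{D}$: since then $\omega Y = 0$ we have $(\bar\nabla_X \omega) Y = -\omega(\nabla_X Y)$, hence
\[
h(X, \phi Y) = C h(X, Y) + \omega(\nabla_X Y).
\]
Because $\omega$ vanishes on $\mathcal{D}$ and maps $\mathcal{D}^\perp$ into $J\mathcal{D}^\perp$, the term $\omega(\nabla_X Y)$ lies in $J\mathcal{D}^\perp$. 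Swapping $X$ and $Y$, using symmetry of $h$, and subtracting cancels the $Ch(X,Y)$ contribution:
\[
h(\phi X, Y) - h(X, \phi Y) = \omega(\nabla_Y X - \nabla_X Y) = -\omega([X,Y]) \in \Gamma(J\mathcal{D}^\perp),
\]
which is orthogonal to $\nu$, proving the first claim.

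\emph{Second part.} A short computation shows $\nu$ is $J$-invariant: for $\sigma \in \nu$ and $W \in TM$, writing $W = W_{\mathcal{D}} + W_{\mathcal{D}^\perp}$, one has $\langle J\sigma, W\rangle = -\langle \sigma, JW_{\mathcal{D}}\rangle - \langle \sigma, JW_{\mathcal{D}^\perp}\rangle = 0$ (since $JW_{\mathcal{D}} \in \mathcal{D} \subset TM$ is orthogonal to the normal $\sigma$, and $JW_{\mathcal{D}^\perp} \in J\mathcal{D}^\perp$ is orthogonal to $\nu$); similarly $\langle J\sigma, JZ\rangle = \langle \sigma, Z\rangle = 0$ for $Z \in \mathcal{D}^\perp$. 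Hence $B\sigma = 0$ and $C\sigma = J\sigma \in \nu$. Decomposing $H_{\mathcal{D}} = \alpha + \beta$ with $\alpha \in J\mathcal{D}^\perp$, $\beta \in \nu$, one checks $CH_{\mathcal{D}} = J\beta$, so $CH_{\mathcal{D}} = 0$ is equivalent to $\langle H_{\mathcal{D}}, \sigma\rangle = 0$ for every $\sigma \in \Gamma(\nu)$. I would then pick a $\phi$-adapted orthonormal frame $\{E_1, \phi E_1, \ldots, E_p, \phi E_p\}$ of $\mathcal{D}$ (possible because $\phi^2|_{\mathcal{D}} = -I$) and apply the first assertion with $X = E_i$, $Y = \phi E_i$:
\[
0 = \langle (\phi A_\sigma + A_\sigma\phi) E_i, \phi E_i\rangle = \langle A_\sigma E_i, E_i\rangle + \langle A_\sigma \phi E_i, \phi E_i\rangle,
\]
using $\langle \phi A_\sigma E_i, \phi E_i\rangle = -\langle A_\sigma E_i, \phi^2 E_i\rangle = \langle A_\sigma E_i, E_i\rangle$. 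Summing over $i$ gives $2p\,\langle H_{\mathcal{D}}, \sigma\rangle = 0$.

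The only mild obstacle is the first part: one must spot that antisymmetrising in $X, Y$ eliminates the $Ch(X,Y)$ contribution from (\ref{eqn:structure-general2}) and leaves a residue purely in $J\mathcal{D}^\perp$. Once this $J\mathcal{D}^\perp$-valuedness is established, the remainder is a straightforward computation in a $J$-adapted frame.
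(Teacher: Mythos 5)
Your proof is correct and complete. The paper states this lemma as a quoted result from \cite{loo} without reproducing a proof, and your argument --- antisymmetrising the structure equation $(\bar\nabla_X\omega)Y=-h(X,\phi Y)+Ch(X,Y)$ in $X,Y\in\Gamma(\mathcal D)$ so that the residue $-\omega([X,Y])$ lands in $J\mathcal D^\perp$, which is orthogonal to $\nu$, and then tracing over a $\phi$-adapted orthonormal frame of $\mathcal D$ to get $CH_{\mathcal D}=0$ --- is exactly the standard route taken in that reference.
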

If $h(\mathcal{D}^\perp,\mathcal{D})=0$, the CR-submanifold $M$ is said to
be \emph{mixed totally geodesic}. $M$ is said to be \emph{mixed foliate} if
it is mixed totally geodesic and $\mathcal{D}$ is integrable.

The following lemma characterizes mixed foliate CR-submanifolds in a Kaehler
manifold. 
\begin{lemma}[\cite{chen3}] \label{lem:mixed-foliate}
A CR-submanifold $M$ in a Kaehler manifold is mixed foliate if and only if 
$Bh(\phi X,Y)=Bh(X,\phi Y)$, for any $X, Y\in\Gamma(\mathcal D)$ and 
$h(\mathcal D^\perp,\mathcal D)=0$.
\end{lemma}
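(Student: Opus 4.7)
The plan is to split the biconditional according to the two components of ``mixed foliate'': mixed totally geodesicity and integrability of $\mathcal{D}$. The condition $h(\mathcal{D}^\perp,\mathcal{D})=0$ is literally the definition of mixed totally geodesic, so it appears on both sides of the equivalence unchanged. Consequently the only substance of the lemma is the equivalence, valid with no further assumption,
\[
\mathcal{D}\text{ is integrable}\iff Bh(\phi X,Y)=Bh(X,\phi Y)\text{ for all } X,Y\in\Gamma(\mathcal{D}).
\]

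The main tool is the structure equation $(\bar\nabla_X\omega)Y=-h(X,\phi Y)+Ch(X,Y)$ already recorded above. For $X,Y\in\Gamma(\mathcal{D})$ the vector $JY$ is tangent, so $\omega Y=0$ and the left-hand side collapses to $-\omega\nabla_X Y$. Rearranging and then antisymmetrizing in $X$ and $Y$, using the symmetry of $h$ (and hence of $Ch$), I obtain
\[
\omega[X,Y]=h(X,\phi Y)-h(\phi X,Y).
\]

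To extract the form asserted in the lemma I would apply the bundle map $B$ to both sides. Under the decomposition $TM^\perp=J\mathcal{D}^\perp\oplus\nu$, the map $B$ vanishes on $\nu$ and coincides with $J$ on $J\mathcal{D}^\perp$. Since $\omega$ takes values in $J\mathcal{D}^\perp$, the previous display yields
\[
Bh(X,\phi Y)-Bh(\phi X,Y)=B\omega[X,Y]=J\omega[X,Y].
\]
Because $J$ is a linear isomorphism and $\omega$ restricts to an isomorphism $\mathcal{D}^\perp\to J\mathcal{D}^\perp$ (as $\mathcal{D}^\perp$ is totally real), the right-hand side vanishes precisely when the $\mathcal{D}^\perp$-component of $[X,Y]$ is zero, i.e., exactly when $\mathcal{D}$ is integrable. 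The only step requiring genuine care---my expected obstacle---is the bookkeeping of the decomposition $TM^\perp=J\mathcal{D}^\perp\oplus\nu$ and of the actions of $B$ and $C$ on each summand; once that is settled, everything follows directly from Kaehler parallelism together with the Gauss and Weingarten formulas.
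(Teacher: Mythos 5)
Your argument is correct, and it is essentially the standard proof of this result: the paper itself gives no proof (the lemma is quoted from Chen's paper \cite{chen3}), and Chen's original argument likewise antisymmetrizes the identity $\omega\nabla_XY=h(X,\phi Y)-Ch(X,Y)$ (obtained from $(\bar\nabla_X\omega)Y=-h(X,\phi Y)+Ch(X,Y)$ with $\omega Y=0$ on $\mathcal D$) to identify the $J\mathcal D^\perp$-component of $h(X,\phi Y)-h(\phi X,Y)$ with $\omega[X,Y]$, so that the $Bh$-symmetry is exactly integrability of $\mathcal D$, while $h(\mathcal D^\perp,\mathcal D)=0$ is by definition mixed total geodesy. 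Your bookkeeping of $B$ on the splitting $TM^\perp=J\mathcal D^\perp\oplus\nu$ (namely $B|_\nu=0$ because $\nu$ is $J$-invariant, and $B$ an isomorphism of $J\mathcal D^\perp$ onto $\mathcal D^\perp$) is exactly what is needed and is handled correctly.
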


Now suppose the ambient space is an $n$-dimensional complex space form $\hat
M_n(c)$ with constant holomorphic sectional curvature $4c$. The curvature
tensor $\hat R$ of $\hat M_n(c)$ is given by 
\begin{equation*}
\hat R(X,Y)Z=c(X\wedge Y+JX\wedge JY-2\langle JX,Y\rangle J)Z 
\end{equation*}
for any $X,Y,Z\in\Gamma(T\hat M_n(c))$. 
%and $M$ is a CR-submanifold in $\hat M_n(c)$.
The equations of Gauss, Codazzi and Ricci are then given respectively by 
\begin{align*}
R(X,Y)Z=c(X\wedge Y+\phi X\wedge \phi Y-2\langle\phi X,Y\rangle\phi)Z
+A_{h(Y,Z)}X-A_{h(X,Z)}Y \\
(\bar\nabla_{X}h)(Y,Z)-(\bar\nabla_{Y}h)(X,Z) = c\{\langle \phi
Y,Z\rangle\omega X-\langle \phi X,Z\rangle\omega Y - 2\langle\phi
X,Y\rangle\omega Z\} \\
R^\perp(X,Y)\sigma = c(\omega X\wedge\omega Y-2\langle\phi X,Y\rangle
C)\sigma+h(X,A_\sigma Y)-h(Y,A_\sigma X)
\end{align*}
for any $X$, $Y$, $Z\in\Gamma(TM)$ and $\sigma\in\Gamma(TM^\perp)$.

We now recall the following known result.

\begin{theorem}[\cite{chen3}, \cite{chen-wu}]\label{thm:mixed-foliate}
There does not exist any proper mixed foliate CR-submanifold in $\hat M_n(c)$, $c\neq0$.
\end{theorem}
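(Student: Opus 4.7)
The plan is proof by contradiction: assume $M$ is a proper mixed foliate CR-submanifold in $\hat M_n(c)$ with $c\neq 0$ and force an impossible pointwise identity on the shape operator $A_{JZ}|_{\mathcal D}$. Pick unit vectors $X\in\mathcal D$ and $Z\in\mathcal D^\perp$, both available because $M$ is proper. A short expansion with the complex space form curvature formula gives $\hat R(X,\phi X)Z=-2cJZ$, which is purely normal, so the Codazzi equation reads
\[
(\bar\nabla_X h)(\phi X,Z)-(\bar\nabla_{\phi X}h)(X,Z)=-2cJZ.
\]
I would expand the left-hand side using three ingredients from Lemma~\ref{lem:mixed-foliate} and the structure equations: the mixed totally geodesic hypothesis $h(\mathcal D,\mathcal D^\perp)=0$ kills all direct $h$-contractions; the integrability of $\mathcal D$ yields $[X,\phi X]\in\mathcal D$ and so removes the $\mathcal D^\perp$-part of the bracket term; and \eqref{eqn:structure-general1} applied with $Z\in\mathcal D^\perp$ gives $(\bar\nabla_X\phi)Z=A_{JZ}X$, which rearranges to $(\nabla_XZ)_{\mathcal D}=\phi A_{JZ}X$.

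In parallel, the condition $Bh(\phi X,Y)=Bh(X,\phi Y)$ for $X,Y\in\mathcal D$ dualizes via the identity $\langle Bh(X,Y),Z\rangle=-\langle A_{JZ}X,Y\rangle$ into the anti-commutation $A_{JZ}\phi+\phi A_{JZ}=0$ on $\mathcal D$. Substituting everything into the Codazzi relation collapses it to
\[
h(X,A_{JZ}X)+h(\phi X,A_{JZ}\phi X)=-2cJZ.
\]
Pairing with $JZ$ and using that $\phi$ acts as an isometry on $\mathcal D$ yields the clean pointwise identity $|A_{JZ}X|^2=-c$ for all unit vectors $X\in\mathcal D$, $Z\in\mathcal D^\perp$. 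For $c>0$ the left side is non-negative while the right is strictly negative, an immediate contradiction, ruling out the $\mathbb CP_n$ case.

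The main obstacle is $c<0$, where the identity merely says that $A_{JZ}|_{\mathcal D}$ is $\sqrt{-c}$ times a symmetric involution anti-commuting with $\phi$, which is consistent at this level. I would close this case by polarizing in $Z$ to produce the Clifford-type relations $\{A_{JZ},A_{JZ'}\}=0$ for orthonormal $Z,Z'\in\mathcal D^\perp$, then invoking the Ricci equation together with Lemma~\ref{lem:CH_D=0} to derive analogous anti-commutation with $A_\sigma$ for $\sigma\in\nu$, and finally feeding these into the Gauss equation on a holomorphic plane in $\mathcal D$ to extract a curvature obstruction incompatible with $c<0$ for a proper CR-submanifold; this delicate second half is the content of \cite{chen-wu}.
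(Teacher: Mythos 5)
First, a point of reference: the paper does not prove this statement at all --- it is imported verbatim from \cite{chen3} and \cite{chen-wu} as a known theorem --- so there is no internal proof to compare against; your attempt has to be judged on its own. The positive-curvature half of your argument is correct and is essentially the classical one. I checked the key steps: $\hat R(X,\phi X)Z=-2cJZ$ is right; the mixed totally geodesic condition together with integrability of $\mathcal D$ does reduce the Codazzi expansion to the two terms $h(\phi X,(\nabla_XZ)_{\mathcal D})$ and $h(X,(\nabla_{\phi X}Z)_{\mathcal D})$; $(\nabla_XZ)_{\mathcal D}=\phi A_{JZ}X$ follows from \eqref{eqn:structure-general1} exactly as you say; the dualization of $Bh(\phi X,Y)=Bh(X,\phi Y)$ to $A_{JZ}\phi+\phi A_{JZ}=0$ on $\mathcal D$ is correct; and pairing with $JZ$ gives $|A_{JZ}X|^2+|A_{JZ}\phi X|^2=-2c$, whence $|A_{JZ}X|^2=-c$ since $\phi$ is an isometry of $\mathcal D$ commuting (up to sign) with $A_{JZ}$. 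This kills $c>0$ cleanly.

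The genuine gap is the case $c<0$, and the route you sketch for it will not close. Everything you propose to derive there is pointwise algebra: $A_{JZ}^2=-c\,\mathrm{id}$ on $\mathcal D$, the anticommutation relations $\{A_{JZ},A_{JZ'}\}=0$ and $\{A_\sigma,\phi\}=0$, and the Gauss equation on a holomorphic plane. But the Gauss equation only \emph{expresses} the intrinsic holomorphic sectional curvature in terms of $c$ and $h$; you have no independent knowledge of that intrinsic curvature to set it against, so no obstruction comes out. Indeed, at a single point the whole package is consistent for $c<0$: a symmetric involution (times $\sqrt{-c}$) anticommuting with the complex structure, together with a family of pairwise anticommuting such operators, is just a Clifford module structure on $\mathcal D_x$, and such structures exist. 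The non-existence in the hyperbolic case is forced by differentiating these identities --- second-order information extracted from the Codazzi equation along the foliation --- which is precisely why it required the separate paper \cite{chen-wu}. You flag this honestly, but as written the proposal proves only the $\mathbb{C}P_n$ half of the theorem; the $\mathbb{C}H_n$ half remains an appeal to the literature rather than a proof.
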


%\begin{theorem}[\cite{kawamoto}, \cite{okumura}]\label{thm:codim}
%Let $M$ be a connected real $n$-dimensional in $\hat M_{(n+p)/2}(c)$, $c\neq0$ and let 
%$N_0(x)$ be the orthogonal complement of the first normal space in $T_xM^\perp$.
%We put $H_0(x)=JN_0(x)\cap N_0(x)$ and let $H(x)$ be a $J$-invariant subspace of $H_0(x)$. 
%If the orthogonal complement $H_2(x)$ of $H(x)$ in $T_xM^\perp$ is invariant under parallel translation with respect to the normal connection and if $q$ is the constant dimension of $H_2(x)$, for each $x\in M$,
%then there exists a $(n+q)$-dimensional totally geodesic holomorphic submanifold $\hat M_{(n+q)/2}(c)$ in  
%$\hat M_{(n+p)/2}(c)$ such that $M\subset \hat M_{(n+q)/2}(c)$.
%\end{theorem}

%%%%%%%%%%%%%%%%%%%%%%%%%%%%%%%%%%%%%%%%%%%%%%%%%%%%%%%%%%%%%%%%%%%%%%%%%%%%%%%%%%%%%%%%%%%%%
%%%%%%%%%%%%%%%%%%%%%%%%%%%%%%%%%%%%%%%%%%%%%%%%%%%%%%%%%%%%%%%%%%%%%%%%%%%%%%%%%%%%%%%%%%%%%

\section{Mixed-totally geodesic CR-submanifolds in a complex space form}

A real hypersurface $M$ in a Kaehler manifold is said to be \emph{Hopf} if
it is mixed-totally geodesic. A fundamental property of Hopf hypersurfaces
in $\hat M_n(c)$, $c\neq0$, is that the principal curvature $\alpha$
corresponds to the Reeb vector field $\xi$ is constant. Moreover, the other
principal curvatures could be related to $\alpha$ by a nice formula (cf. 
\cite{ryan}). In this section, we show that these properties hold for
mixed-totally geodesic proper CR-submanifolds of maximal CR-dimension.

Suppose $M$ is a real $(2p+1)$-dimensional CR-submanifold in $\hat M_{n}(c)$
of maximal CR-dimension, that is, $\dim_{\mathbb{C}}\mathcal{D}=p$ and $\dim%
\mathcal{D}^\perp=1$. Let $N\in\Gamma(J\mathcal{D}^\perp)$ be a unit vector
field, $\xi=-JN$ and $\eta$ the 1-form dual to $\xi$. Then we have 
\begin{align}
\phi^2X &=-X+\eta(X)\xi  \label{eqn:contact1} \\
\omega X &=\eta(X)N; \quad B\sigma=-\langle \sigma,N\rangle \xi
\label{eqn:contact2}
\end{align}
for any $X\in\Gamma(TM)$ and $\sigma\in\Gamma(TM^\perp)$. It follows from (%
\ref{eqn:structure-general1}) and (\ref{eqn:structure-general2}) that 
\begin{align}
(\nabla_X\phi)Y &=\eta(Y)A_NX-\langle A_NX,Y\rangle\xi
\label{eqn:structure1} \\
\nabla_X\xi &=\phi A_NX; \quad \nabla^\perp_XN=Ch(X,\xi)
\label{eqn:structure2} \\
h(X,\phi Y) &=-\langle \phi A_NX,Y\rangle N-\eta(Y)Ch(X,\xi)+Ch(X,Y) 
\label{eqn:structure3}
\end{align}
for any $X$, $Y\in\Gamma(TM)$ and $\sigma\in\Gamma(TM^\perp)$. 
%Here, we put $A=A_N$ for simplicity.

The equations of Codazzi and Ricci can also be reduced to 
\bea
(\bar\nabla_{X}h)(Y,Z)-(\bar\nabla_{Y}h)(X,Z) &=& c\{\eta(X)\langle \phi
Y,Z\rangle-\eta(Y)\langle \phi X,Z\rangle 
\notag\\&&- 2\eta(Z)\langle\phi X,Y\rangle\}N
\eea
\begin{align}
R^\perp(X,Y)\sigma =-2c\langle\phi X,Y\rangle C\sigma+h(X,A_\sigma
Y)-h(Y,A_\sigma X) 
\end{align}
for any $X$, $Y$, $Z\in\Gamma(TM)$ and $\sigma\in\Gamma(TM^\perp)$. 
%We define the covariant derivative of the shape operator as 
%\begin{align} \label{eqn:del_A}
%(\nabla_XA)_\sigma Y=\nabla_X\{A_\sigma Y\}-A_{\sigma}\nabla_XY-A_{\nabla^\perp_X\sigma}Y.
%\end{align}
%Then we have 
%\begin{align*} 
%<(\nabla_XA)_\sigma Y,Z>=<(\nabla_Xh)(Y,Z),\sigma> 
%\end{align*}
%and the Codazzi equation (\ref{eqn:codazzi-h}) can be rephrased as 
%\begin{align}
%(\nabla_{X}A)_\sigma Y-(\nabla_{Y}A)_\sigma X
%			&=c<\sigma,N>\{\eta(X)\phi Y-\eta(Y)\phi X- 2<\phi X,Y>\xi\}	\label{eqn:codazzi-a} 
%\end{align}
%for any $X$, $Y$, $Z\in\Gamma(TM)$ and $\sigma\in\Gamma(TM^\perp)$.

%The following lemma can be obtained immediately from Lemma~\ref{lem:CH_D=0}.
%\begin{lemma}\label{lem:Ch(xi,xi)}
%Let $M$ be a CR-submanifold of maximal CR-dimension in a Kaehler manifold $\hat M$. Then
%$Ch(\xi,\xi)=CH$.
%\end{lemma}

\begin{lemma}\label{lem:alpha}
Let $M$ be a mixed-totally geodesic proper CR-submanifold of maximal CR-dimension in $\hat M_{n}(c)$, $c\neq0$,
and let $\alpha=\langle h(\xi,\xi),N\rangle$. Then
\begin{enumerate}
	\item[(a)] $2A_N\phi A_N-\alpha(\phi A_N+A_N\phi)-2c\phi=0$;
	\item[(b)] if $A_NY=\lambda Y$ and $A_N\phi Y=\lambda^*\phi Y$, where $Y\in\Gamma(\mathcal D)$, then\\
	$(2\lambda-\alpha)(2\lambda^*-\alpha)=\alpha^2+4c$;
	\item[(c)]  $\alpha$ is a constant. 
	\end{enumerate}
\end{lemma}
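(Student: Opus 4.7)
The plan is to run the classical Hopf-hypersurface argument in the wider setting of mixed-totally geodesic CR-submanifolds of maximal CR-dimension, using the fact that $\dim\mathcal D^\perp=1$ so that the hypothesis $h(\mathcal D^\perp,\mathcal D)=0$ reduces to $h(X,\xi)=0$ for every $X\in\Gamma(\mathcal D)$. From this, taking the $N$-component gives $\langle A_N X,\xi\rangle=0$ for $X\in\Gamma(\mathcal D)$, so $A_N$ preserves $\mathcal D$, and $A_N\xi=\alpha\xi$. Equation (3.4) therefore simplifies on $\mathcal D$ to $h(X,\phi Y)=-\langle\phi A_NX,Y\rangle N+Ch(X,Y)$, which controls the $N$-component of $h$ cleanly.

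For (a), I would test the reduced Codazzi identity on $X=\xi$, $Y\in\Gamma(\mathcal D)$, $Z\in\Gamma(\mathcal D)$. Since $h(\xi,W)=\alpha N\delta_{W,\xi}$-type only, the derivative terms $h(\nabla_{\cdot}\xi,\cdot)$ get replaced by $h(\phi A_N\cdot,\cdot)$ via (3.3), and $\nabla^\perp_X N=Ch(X,\xi)=0$ for $X\in\mathcal D$. Collecting the $N$-component of the Codazzi equation (and using $h(\xi,\xi)=\alpha N+\sigma_0$ with $\sigma_0\in\nu$, which contributes to $\nabla^\perp_\xi$-terms via $Ch(\xi,\xi)$), I expect the surviving terms to assemble exactly into $\langle(2A_N\phi A_N-\alpha(\phi A_N+A_N\phi)-2c\phi)Y,Z\rangle=0$ for all $Y,Z\in\Gamma(\mathcal D)$. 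On $\xi$ both sides vanish by inspection ($\phi\xi=0$, $A_N\xi=\alpha\xi$), giving (a) on all of $TM$. Part (b) is then a purely algebraic corollary: evaluate (a) on the eigenvector $Y$, pair with $\phi Y$, and use $A_N\phi Y=\lambda^*\phi Y$ and $\|\phi Y\|^2=\|Y\|^2$ to get $2\lambda\lambda^*-\alpha(\lambda+\lambda^*)-2c=0$, which factors as $(2\lambda-\alpha)(2\lambda^*-\alpha)=\alpha^2+4c$.

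For (c), I would first show that $\alpha$ is constant along $\mathcal D$. Writing $X\alpha=\langle\nabla^\perp_X h(\xi,\xi),N\rangle+\langle h(\xi,\xi),\nabla^\perp_X N\rangle$, expanding $\nabla^\perp_X h(\xi,\xi)=(\bar\nabla_X h)(\xi,\xi)+2h(\nabla_X\xi,\xi)=(\bar\nabla_X h)(\xi,\xi)+2h(\phi A_N X,\xi)$, and noting that $\phi A_N X\in\mathcal D$ for $X\in\mathcal D$ so the last term vanishes by mixed-totally geodesicity, the Codazzi equation with $Y=Z=\xi$ reduces $X\alpha$ to a multiple of $\langle\phi X,\xi\rangle=0$, yielding $X\alpha=0$ for $X\in\Gamma(\mathcal D)$. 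Hence $d\alpha=(\xi\alpha)\eta$. The main obstacle is then to rule out $\xi\alpha\neq0$; for this I would follow Maeda/Ryan: take $d$ of $d\alpha=(\xi\alpha)\eta$, use $d\eta(X,Y)=-2\langle\phi A_N X,Y\rangle$ (computed from (3.3) using $\phi A_N+A_N\phi$ skew behavior from (a)), and exploit the operator identity from (a) applied at a point where $A_N$ has an eigenvector in $\mathcal D$ with distinct eigenvalue, to force $\xi\alpha=0$ and conclude $\alpha$ is a global constant.
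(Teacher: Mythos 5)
Your parts (a) and (b) follow essentially the paper's route: the authors differentiate the mixed-geodesic identity $h(Y,\xi)=\eta(Y)h(\xi,\xi)$ in an arbitrary direction, antisymmetrize, and feed the result into the Codazzi equation; setting $Y=\xi$ isolates both $\nabla^\perp_X h(\xi,\xi)=\eta(X)\nabla^\perp_\xi h(\xi,\xi)$ and the operator identity (a), and (b) is the same algebraic evaluation you describe. One step you should make explicit in (a): with $X=\xi$ and $Y,Z\in\Gamma(\mathcal D)$ the Codazzi equation still contains the term $(\bar\nabla_\xi h)(Y,Z)$, which the mixed-geodesic hypothesis does not control; it must be eliminated by using its symmetry in $Y$ and $Z$ (equivalently, by antisymmetrizing the Codazzi identity over $Y,Z$), which is exactly what the paper's arrangement of the computation achieves. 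Also note that $2d\eta(X,Y)=\langle(\phi A_N+A_N\phi)X,Y\rangle$, not $-2\langle\phi A_N X,Y\rangle$; the anticommutator is the whole point, since its vanishing on $\mathcal D$ is equivalent to the integrability of $\mathcal D$.

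The genuine gap is in the final step of (c). From $0=d^2\alpha$ one arrives, as you indicate, at $(\xi\alpha)\,d\eta=0$, i.e.\ $(\xi\alpha)(\phi A_N+A_N\phi)=0$ on $\mathcal D$. Your plan to rule out $\xi\alpha\neq0$ by ``exploiting the operator identity from (a)'' only closes the case $c>0$: if $\phi A_N+A_N\phi=0$ on $\mathcal D$, then (a) forces $A_N^2=-c$ on $\mathcal D$, which is absurd for $c>0$ but perfectly consistent for $c<0$ (eigenvalues $\pm\sqrt{-c}$, for which the relation in (b) holds identically). No pointwise algebraic manipulation of (a) and (b) will finish the hyperbolic case. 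The paper instead observes that $\phi A_N+A_N\phi=0$ on $\mathcal D$ is precisely the integrability of $\mathcal D$, so on the open set where $\xi\alpha\neq0$ the submanifold, being already mixed totally geodesic, would be mixed foliate; this contradicts the Chen / Chen--Wu non-existence theorem for proper mixed foliate CR-submanifolds in $\hat M_n(c)$, $c\neq0$ (Theorem~\ref{thm:mixed-foliate}). Without invoking that theorem, or some equivalent global input, your argument does not conclude when $c<0$.
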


\begin{proof}
By the hypothesis,  
\begin{align}\label{eqn:mixed-geodesic-def}
h(Y,\xi)=\eta(Y)h(\xi,\xi)
\end{align}
for any $Y\in\Gamma(TM)$. 
%Write $\alpha=<h(\xi,\xi),N>$ and $\alpha_\sigma=< h(\xi,\xi),\sigma>=< H,\sigma>$, for $\sigma\in\Gamma(\nu)$. Then we have 
%$A_N\xi=\alpha \xi$ and $A_\sigma\xi=\alpha_\sigma\xi$.
Differentiating covariantly both sides of (\ref{eqn:mixed-geodesic-def})
in the direction of $X\in\Gamma(TM)$, we get 
\[
(\bar\nabla_Xh)(Y,\xi)+h(\phi A_NX,Y)=\langle\phi A_NX,Y\rangle h(\xi,\xi)+\eta(Y)\nabla^\perp_X h(\xi,\xi).
\]
By applying the Codazzi equation and this equation, we have
\bea
&h(\phi A_NX,Y)-h(X,\phi A_NY)-\langle (\phi A_N+A_N\phi)X,Y\rangle h(\xi,\xi)\notag\\
&-2c\langle\phi X,Y\rangle N=\eta(Y)\nabla^\perp_X h(\xi,\xi)-\eta(X)\nabla^\perp_Y h(\xi,\xi).
\eea
By putting $Y=\xi$ in this equation, we obtain
\begin{align}\label{eqn:del_h:01}
\nabla^\perp_X h(\xi,\xi)=\eta(X)\nabla^\perp_{\xi} h(\xi,\xi)
\end{align}
and 
\bea\label{eqn:del_h:02}
&h(\phi A_NX,Y)-h(X,\phi A_NY)-\langle(\phi A_N+A_N\phi)X,Y\rangle h(\xi,\xi)
\notag\\&=2c\langle\phi X,Y\rangle N.
\eea
By taking inner product of (\ref{eqn:del_h:02}) with $N$, we get
\[ %begin{align}
2A_N\phi A_N-\alpha(\phi A_N+A_N\phi)-2c\phi=0.
\] %end{align}
Statement (b) is directly from this equation.
Next, it follows from (\ref{eqn:structure2}), (\ref{eqn:mixed-geodesic-def}),
and (\ref{eqn:del_h:01}) that %and Lemma \ref{lem:Ch(xi,xi)} that  
\[
Y\alpha=Y\langle h(\xi,\xi),N\rangle=g\eta(Y)
\]
for any $Y\in\Gamma(TM)$, where $g=\xi\alpha$, i.e., $d\alpha=g\eta$. Hence 
\[
0=d^2\alpha=dg\wedge\eta+gd\eta.
\]
Since $2d\eta(X,\xi)=\langle(\phi A_N+A_N\phi)X,\xi\rangle=0$ and 
$Xg-(\xi g)\eta(X)=dg\wedge\eta(X,\xi)$, for any $X\in\Gamma(TM)$,  we have $dg=(\xi g)\eta$. Hence we have $gd\eta=0$.
This implies that $g=0$ (for otherwise, 
if $d\eta=0$ then $\mathcal D$ is integrable. It follows that $M$ is mixed foliate but this contradicts
Theorem~\ref{thm:mixed-foliate}). Hence we have $d\alpha=0$ or $\alpha$ is a constant.
\end{proof}

%\end{proof}

%%%%%%%%%%%%%%%%%%%%%%%%%%%%%%%%%%%%%%%%%%%%%%%%%%%%%%%%%%%%%%%%%%%%%%%%%%%%%%%%%%%%%%%%%%%%%
%%%%%%%%%%%%%%%%%%%%%%%%%%%%%%%%%%%%%%%%%%%%%%%%%%%%%%%%%%%%%%%%%%%%%%%%%%%%%%%%%%%%%%%%%%%%%

\section{Proof of Theorem~\protect\ref{thm:main}}

Throughout this section, suppose $M$ is a $(2p+q)$-dimensional pseudo
parallel proper CR-submanifold in $\hat M_{n}(c)$, $c\neq0$, where $\dim_{%
\mathbb{C}}\mathcal{D}=p\geq2$ and $\dim_{\mathbb{R}}\mathcal{D}^\perp =q$.

Note that $\mathfrak{S}_{X,Y,Z}((X\wedge Y)h)(Z,W)=0$ and 
\begin{equation*}
\mathfrak{S}_{X,Y,Z}(\bar R(X,Y)h)(Z,W)=\mathfrak{S}_{X,Y,Z}\{R^%
\perp(X,Y)h(Z,W)-h(Z,R(X,Y)W)\} 
\end{equation*}
for any $X,Y,Z,W\in\Gamma(TM)$, where $\mathfrak{S}_{X,Y,Z}$ denotes the
cyclic sum over $X,Y$ and $Z$. By the Gauss and Ricci equations, we obtain
the following equation. 
\begin{align}  \label{eqn:general}
&\langle \omega Y,h(Z,W)\rangle\langle\omega X,\sigma\rangle-\langle \omega
X,h(Z,W)\rangle\langle\omega Y,\sigma\rangle -2\langle\phi X,Y\rangle\langle
Ch(Z,W),\sigma\rangle  \notag \\
&+\langle \omega Z,h(X,W)\rangle\langle\omega Y,\sigma\rangle-\langle \omega
Y,h(X,W)\rangle\langle\omega Z,\sigma\rangle -2\langle\phi Y,Z\rangle\langle
Ch(X,W),\sigma\rangle  \notag \\
&+\langle \omega X,h(Y,W)\rangle\langle\omega Z,\sigma\rangle-\langle \omega
Z,h(Y,W)\rangle\langle\omega X,\sigma\rangle -2\langle\phi Z,X\rangle\langle
Ch(Y,W),\sigma\rangle  \notag \\
&-\langle\phi Y,W\rangle\langle h(Z,\phi X),\sigma\rangle+\langle\phi
X,W\rangle\langle h(Z,\phi Y),\sigma\rangle +2\langle\phi X,Y\rangle\langle
h(Z,\phi W),\sigma\rangle  \notag \\
&-\langle\phi Z,W\rangle\langle h(X,\phi Y),\sigma\rangle+\langle\phi
Y,W\rangle\langle h(X,\phi Z),\sigma\rangle +2\langle\phi Y,Z\rangle\langle
h(X,\phi W),\sigma\rangle  \notag \\
&-\langle\phi X,W\rangle\langle h(Y,\phi Z),\sigma\rangle+\langle\phi
Z,W\rangle\langle h(Y,\phi X),\sigma\rangle +2\langle\phi Z,X\rangle\langle
h(Y,\phi W),\sigma\rangle\notag\\
&=0.
\end{align}
for any $X,Y,Z,W\in\Gamma(TM)$ and $\sigma\in\Gamma(TM^\perp)$. By putting $%
Z\in\Gamma(TM)$, $W\in\Gamma(D^\perp)$, $Y=\phi X$, $X\in\Gamma(\mathcal{D})$
with $||X||=1$ and $X\perp Z,\phi Z$ in (\ref{eqn:general}), we obtain 
\begin{align}  \label{eqn:Ch(D^perp,TM)=0}
Ch(\mathcal{D}^\perp, TM)=0.
\end{align}

Let $\{E_{1},E_{2},\cdots ,E_{2p}\}$ be a local orthonormal frame on $%
\mathcal{D}$. By putting $X=E_{j}$, $Z=\phi E_{j}$ for $j\in \{1,2,\cdots
,2p\}$ in (\ref{eqn:general}), and then summing up these equations, with the
help of (\ref{eqn:Ch(D^perp,TM)=0}), we obtain 
\begin{align}
 (2p-2)Ch(Y,W)-2p\langle \phi Y,W\rangle H_{\mathcal{D}}-h(\phi ^{2}W,\phi Y)&  \nonumber  \label{eqn:110} \\
-2h(\phi ^{2}Y,\phi W)-(2p+1)h(Y,\phi W)& =0
\end{align}%
for any $Y,W\in \Gamma (TM)$. By virtue of (\ref{eqn:Ch(D^perp,TM)=0}),
after putting $Y\in \Gamma (\mathcal{D}^{\perp })$ in the above equation, we
have 
\begin{equation} \label{eqn:mixed-geodesic}
h(\mathcal{D}^{\perp },\mathcal{D})=0.
\end{equation}%
This means that $M$ is mixed-totally geodesic and so (\ref{eqn:110}) reduces
to 
\begin{equation} \label{eqn:130}
(2p-2)Ch(Y,W)-2p\langle \phi Y,W\rangle H_{\mathcal{D}}+h(W,\phi
Y)-(2p-1)h(Y,\phi W)=0
\end{equation}%
for any $Y,W\in \Gamma (TM)$. Next, we put $Y=W$ in the above equation to
get $Ch(Y,Y)-h(Y,\phi Y)=0$, then, combining with the linearity of $C$, $h$
and $\phi $, we obtain 
\begin{equation} \label{eqn:D-umbilic}
2Ch(Y,W)-h(W,\phi Y)-h(Y,\phi W)=0
\end{equation}
for any $Y,W\in \Gamma (TM)$. It follows from this equation and (\ref%
{eqn:130}) that 
\begin{equation}
h(Y,\phi W)=\langle Y,\phi W\rangle H_{\mathcal{D}}+Ch(Y,W)\label{upa}
\end{equation}
for any $Y,W\in \Gamma (TM)$. From (\ref{eqn:general}) and (\ref%
{upa}), we have 
\begin{align*}
& \langle \omega Y,h(Z,W)\rangle \omega X-\langle \omega X,h(Z,W)\rangle
\omega Y+\langle \omega Z,h(X,W)\rangle \omega Y \\
& -\langle \omega Y,h(X,W)\rangle \omega Z+\langle \omega X,h(Y,W)\rangle
\omega Z-\langle \omega Z,h(Y,W)\rangle \omega X=0
\end{align*}%
for any $X,Y,Z,W\in \Gamma (TM)$.

We claim that $q=1$. Suppose the contrary that $q\geq 2$. By putting $Z=W\in
\Gamma (\mathcal{D})$, $Y=BH_{\mathcal{D}}$ and $X\perp BH_{\mathcal{D}}$ a
unit vector field in $\mathcal{D}^{\perp }$ in this equation, with the help
of (\ref{eqn:D-umbilic}), we obtain $BH_{\mathcal{D}}=0$. This, together with
(\ref{eqn:D-umbilic}) imply that $h(\mathcal{D},\mathcal{D})=0$ and hence,
by Lemma~\ref{lem:mixed-foliate} and (\ref{eqn:mixed-geodesic}), $M$ is
mixed foliate. This contradicts Theorem~\ref{thm:mixed-foliate}.
Accordingly, $q=1$. %\end{proof}

Let $N\in\Gamma(J\mathcal{D}^\perp)$ be a unit vector field normal to $M$,
and $(\phi, \eta,\xi)$ the almost contact structure on $M$ as defined in
Section 3. 
%$\xi=-JN$ and $\eta$ the 1-form dual to $\xi$. Since $\dim\mathcal D^\perp=1$, we have 
It follows from Lemma~\ref{lem:CH_D=0} and equations (\ref{eqn:contact1}), (%
\ref{eqn:contact2}), (\ref{eqn:Ch(D^perp,TM)=0}) and (\ref%
{eqn:mixed-geodesic}) that 
\begin{align}
H_{\mathcal{D}}=&\lambda N,  \label{eqn:HD}\quad \\
h(X,\xi)=&\eta(X)h(\xi,\xi)=\alpha\eta(X)N \nonumber
\end{align}
for any $X\in\Gamma(TM)$, where $\lambda=\langle H_{\mathcal{D}},N\rangle$
and $\alpha=\langle h(\xi,\xi),N\rangle$. By using (\ref{eqn:D-umbilic}) and
the above two equations, we obtain 
\begin{align}  \label{eqn:h}
h(X,Y)
&=h(X,-\phi^2 Y+\eta(Y)\xi)	\nonumber\\
&=\{\lambda\langle X,Y\rangle+b\eta(X)\eta(Y)\}N-Ch(X,\phi Y)
\end{align}
for any $X,Y\in\Gamma(TM)$, where $b=\alpha-\lambda$. From Lemma~\ref{lem:alpha} and (\ref{eqn:h}), we obtain 
\begin{align}  \label{eqn:lambda}
\lambda^2-\alpha\lambda-c=0
\end{align}
and so $\lambda$ is a non-zero constant. Further, for any unit vector $Y\in \mathcal D$, we have 
\begin{align*}
0=\langle(\bar R(\xi,Y)h)(Y,\xi),N\rangle\rangle-f\langle((\xi\wedge
Y)h)(Y,\xi),N\rangle =(\alpha-\lambda)(f-\alpha\lambda-c)
\end{align*}
Hence, $f=\lambda^2$ is a positive constant.

We consider two cases: $Ch=0$ and $Ch\neq0$.

\medskip \textbf{Case 1.} $Ch=0$.

By the hypothesis, (\ref{eqn:structure2}) and the fact that $\lambda\neq0$,
the first normal space $\mathcal{N}^1_x=\mathbb{R}N_x$, $x\in M$, and $%
\mathcal{N}^1$ is a parallel normal subbundle of $TM^\perp$. Since $\nu$ is $%
J$-invariant, by Codimension Reduction Theorems (cf. \cite{djoric}, \cite%
{kawamoto}), $M$ is contained in a totally geodesic holomorphic submanifold $%
\hat M_{p+1}(c)$ as a real hypersurface.

Now, let $\nabla^{\prime }$, $A^{\prime }$, \emph{etc} denote the
Levi-Civita connection on $M$ induced by the Levi-Civita connection of $\hat
M_{p+1}(c)$, the shape operator, \emph{etc}, respectively. Since $\hat
M_{p+1}(c)$ is totally geodesic in $\hat M_{n}(c)$, we can see that $%
\nabla^{\prime }_XY=\nabla_XY$, $A^{\prime }=A_N$ and $N^{\prime }=N$.
Further, as $\nabla^\perp N=0$, we have $R^\perp(X,Y)N=0$ and so $R^{\prime
}(X,Y)A=(\bar R(X,Y)A)_N$, for any $X,Y$ tangent to $M$. Then $M$ is a
pseudo parallel real hypersurface in $\hat M_{p+1}(c)$ and by Theorem~\ref%
{thm:pseudo-parallel-hypersurface}, we obtain List (a) and (b-i) in Theorem~
\ref{thm:main}.

\medskip \textbf{Case 2.} $Ch\neq0$.

Suppose $Ch\neq0$ at a point $x\in M$. There is a number $a\neq0$, $%
\sigma\in\nu_x$ and a unit vector $Y\in\mathcal{D}_x$ such that $A_\sigma
Y=aY$. From Lemma~\ref{lem:CH_D=0}, we have $A_\sigma\phi Y=-a\phi Y$. Then
from $\langle (\bar R(\phi Y,Y)h)(Y,\phi Y),\sigma\rangle=f\langle((\phi
Y\wedge Y)h)(Y,\phi Y),\sigma\rangle$, we obtain 
\begin{equation*}
a\{3c-2\langle h(Y,\phi Y),h(Y,\phi Y)\rangle+\langle h(Y,Y),h(\phi Y,\phi
Y)\rangle\}=af. 
\end{equation*}
On the other hand, from (\ref{eqn:h}), we have 
\begin{align*}
&\langle h(Y,\phi Y),h(Y,\phi Y)\rangle=\langle Ch(Y,Y),Ch(Y,Y)\rangle \\
&\langle h(Y,Y),h(\phi Y,\phi Y)\rangle=\lambda^2- \langle
Ch(Y,Y),Ch(Y,Y)\rangle.
\end{align*}
Since $a\neq0$ and $f=\lambda^2$, these equations give $c=\langle
Ch(Y,Y),Ch(Y,Y)\rangle$. Hence, we conclude that $c>0$ (without loss of
generality, we assume $c=1$) and $||Ch||>0$ on the whole of $M$.

Fixed $r>0$ and let $BM$ be the unit normal bundle over $M$. The focal map $%
\Phi_r$ is given by 
\begin{equation*}
BM\ni \sigma\overset{\Phi_r}{\longrightarrow}\exp(r\sigma)\in \mathbb{C}P_n 
\end{equation*}
where $\exp$ is the exponential map on $\mathbb{C}P_n$. For each $x\in M$
and unit vector $\sigma\in T_xM^\perp$, denote by $\gamma_{\sigma}(s)$ the
normalized geodesic in $\mathbb{C}P_n$ passes through $x\in M$ at $s=0$ with
velocity $\sigma$. Let $\mathcal{Y}_X$ be the $M$-Jacobi field along $%
\gamma_{\sigma}$ with initial values $\mathcal{Y}_X(0)=X\in T_xM$ and $\dot{%
\mathcal{Y}}_X(0)=-A_{\sigma}X$. Then (cf. \cite[pp.225]{berndt}) 
\begin{equation*}
d\Phi_r(\sigma)X=\mathcal{Y}_X(r). 
\end{equation*}
In view of (\ref{eqn:h}), $A_N$ has two distinct constant eigenvalues $\alpha
$ and $\lambda$ with eigenspaces $\mathbb{R}\xi$ and $\mathcal{D}_x$
respectively at each $x\in M$. 
We put $\alpha=2\cot 2r$, $0<r<\pi/2$. Then $\lambda=\cot r$
or $\lambda=-\cot(\frac\pi2-r)$ by (\ref{eqn:lambda}).

\medskip \textbf{Subcase 2-a.} $\lambda=\cot r$.

Since $\lambda$ is a nonzero constant, by (\ref{eqn:HD}), $N=\lambda^{-1}H_{\mathcal D}$ is globally
defined on $M$. We may immerse $M$ in $BM$ as a submanifold in a natural
way: $x\mapsto N_x$, $x\in M$.

We claim that $\Phi_r(M)$ is a singleton for a suitable choice of $r$. This
can be done by showing that $d\Phi_r(N_x)T_xM=\{0\}$, for each $x\in M$. We
first note that at each $z\in \mathbb{C}P_n$, the Jacobi operator $\hat
R_\sigma:=\hat R(\cdot,\sigma)\sigma$, $\sigma\in T_z\mathbb{C}P_n$, has
eigenvalues $0$, $4$ and $1$ with eigenspaces $\mathbb{R}\sigma$, $\mathbb{R}%
J\sigma$ and $(\mathbb{R}\sigma\oplus\mathbb{R}J\sigma)^\perp$ respectively,
To compute $d\Phi_r(N_x)X$, $X\in T_xM$, we select the Jacobi field 
\begin{align*}
\mathcal{Y}_X(t)=\left\{%
\begin{array}{rl}
\left(\cos 2t-\frac{\alpha}2\sin 2t\right)\mathcal{E}_X(t), & X=\xi \\ 
(\cos t-\lambda\sin t)\mathcal{E}_X(t), & X\in\mathcal{D}_x%
\end{array}%
\right.
\end{align*}
where $\mathcal{E}_X$ is the parallel vector field along $\gamma_{N_x}$ with 
$\mathcal{E}_X(0)=X$. Then we have $d\Phi_r(N_x)X=\mathcal{Y}_X(r)=0$ and
conclude that $\Phi_r(M)=\{z_0\}$.

\medskip \textbf{Subcase 2-b.} $\lambda=-\cot(\frac\pi2-r)$.

Note that $\cot 2r=-\cot2(\frac\pi2-r)$. By selecting the Jacobi field 
\begin{align*}
\mathcal{Y}_X(t)=\left\{%
\begin{array}{rl}
\left(\cos 2t+\frac{\alpha}2\sin 2t\right)\mathcal{E}_X(t), & X=\xi \\ 
(\cos t+\lambda\sin t)\mathcal{E}_X(t), & X\in\mathcal{D}_x%
\end{array}%
\right.
\end{align*}
we can see that $d\Phi_{\pi/2-r}(-N_x)X=0$, for $X\in T_xM$ and hence $%
\Phi_{\pi/2-r}(M)=\{z_0\}$.

\medskip We have shown that $\Phi_r(M)=\{z_0\}$ for some $r\in]0,\pi/2[$ in
both cases. By checking the Jacobi fields of $\mathbb{C}P_n$ (cf. \cite[%
pp.149]{gallot}), there is no conjugate point for $z_0$ along any geodesic
in $\mathbb{C}P_n$ of length $r\in]0,\pi/2[$ starting at $z_0$, we conclude
that $M$ lies in a geodesic hypersphere $M^{\prime }$ around $z_0$ in $%
\mathbb{C}P_n$ with almost contact structure $(\phi^{\prime },\eta^{\prime
},\xi^{\prime })$, where $\xi^{\prime }=-JN^{\prime }$, $\eta^{\prime }$ the 
$1$-form dual to $\xi^{\prime }$, $\phi^{\prime }=J_{|TM^{\prime
}}-\eta^{\prime }\otimes N^{\prime }$ and $N^{\prime }$ a unit vector field
normal to $M^{\prime }$. By the construction of $M^{\prime }$, we have $%
N=N^{\prime }$, $\xi=\xi^{\prime }$ and $\phi=\phi^{\prime }$ on $M$. It
follows that $\phi^{\prime }TM\subset TM$ and so $M$ is an invariant
submanifold of $M^{\prime }$ (cf. \cite{yano-kon}). Hence we obtain List
(b-ii) in Theorem~\ref{thm:main}.

%%%%%%%%%%%%%%%%%%%%%%%%%%%%%%%%%%%%%%%%%%%%%%%%%%%%%%%%%%%%%%%%
%%%%%%%%%%%%%%%%%%%%%%%%%%%%%%%%%%%%%%%%%%%%%%%%%%%%%%%%%%%%%%%%%%%
\section{Recurrent CR-submanifolds in a non-flat complex space form}
In this section, wel show that there are no proper recurrent CR-submanifolds in $\hat M_n(c)$, $n\neq0$.
We first discuss the ideas of recurrence and semi-parallelism in a general setting.
%Let $M$ be a Riemannian manifold and $\mathcal E$ a Riemannian vector bundle.
%We denote by the same $<,>$ the fiber metric on $\mathcal E$ and $\bar\nabla$ the connection on $\mathcal E$ compatible with $<,>$.

Let $M$ be a Riemannian manifold and $\mathcal E_j$ a Riemannian vector bundle over $M$ with linear connection $\nabla^j$, $j\in\{1,2\}$.
It is known that $\mathcal E^*_1\otimes \mathcal E_2$ is isomorphic to the vector bundle $Hom(\mathcal E_1,\mathcal E_2)$, consisting of homomorphisms from $\mathcal E_1$ into $\mathcal E_2$. 
We denote by the same $\langle,\rangle$ the fiber metrics on $\mathcal E_1$ and $\mathcal E_2$ as well as that induced on $Hom(\mathcal E_1,\mathcal E_2)$.
The connections  $\nabla^1$ and $\nabla^2$ induce on $Hom(\mathcal E_1,\mathcal E_2)$ a connection $\bar 
\nabla$, given by
\[
(\bar\nabla_XF)V=(\bar\nabla F)(V;X)=\nabla^2_XFV-F\nabla^1_XV
\]
for any $X\in\Gamma(TM)$, $V\in\Gamma(\mathcal E_1)$ and $F\in\Gamma(Hom(\mathcal E_1,\mathcal E_2))$.

A section $F$ in $Hom(\mathcal E_1,\mathcal E_2)$ is said to be \emph{recurrent} if there exists $\tau\in\Omega^1(M)$ such that 
$\bar\nabla F=F\otimes \tau$. 
We may regard parallelism as a special case of recurrence, that is, the case $\tau=0$.
Let $\bar R$, $R^1$ and $R^2$ be the curvature tensor corresponding to $\bar\nabla$, $\nabla^1$ and $\nabla^2$ respectively.
Then we have
\[
(\bar R\cdot F)(V;X,Y)=(\bar R(X,Y)F)V=R^2(X,Y)FV-FR^1(X,Y)V
\] 
for any $X,Y\in\Gamma(TM)$, $V\in\Gamma(\mathcal E_1)$ and $F\in\Gamma(Hom(\mathcal E_1,\mathcal E_2))$.

We begin with the following result.

\begin{lemma}\label{lem:recurrent}
Let $M$ be a connected Riemannian manifold, $\mathcal E_j$ a Riemannian vector bundle over $M$, $j\in\{1,2\}$ and 
$F\in\Gamma(Hom(\mathcal E_1,\mathcal E_2))$.
If $F$ is recurrent then $F$ is semi-parallel. 
\end{lemma}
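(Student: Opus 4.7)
The plan is to unpack the curvature action $\bar R\cdot F$ using the recurrence relation $\bar\nabla_X F = \tau(X) F$, reduce it to a scalar multiple of $F$ controlled by $d\tau$, and then invoke compatibility of $\bar\nabla$ with the fiber metric on $Hom(\mathcal{E}_1,\mathcal{E}_2)$ to force the scalar to vanish wherever $F$ is nonzero.

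First I would compute $\bar R(X,Y) F$ directly from its definition. Substituting $\bar\nabla_Z F = \tau(Z) F$ and applying the Leibniz rule gives $\bar\nabla_X \bar\nabla_Y F = X(\tau(Y))\,F + \tau(X)\tau(Y)\,F$. Antisymmetrizing in $X,Y$ and subtracting $\bar\nabla_{[X,Y]}F = \tau([X,Y])\,F$, the quadratic $\tau(X)\tau(Y)$ terms cancel because $\tau(X)$ and $\tau(Y)$ are scalars, leaving
\[
\bar R(X,Y) F = \bigl[X(\tau(Y)) - Y(\tau(X)) - \tau([X,Y])\bigr]\, F = d\tau(X,Y)\, F,
\]
so $(\bar R\cdot F)(V;X,Y) = d\tau(X,Y)\, FV$ for every $V\in\Gamma(\mathcal{E}_1)$.

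Second, I would use metric compatibility to show $d\tau = 0$ on the open set $U := \{x\in M : F_x\neq 0\}$. From $\bar\nabla_X F = \tau(X) F$ and the Leibniz rule for the induced fiber inner product,
\[
X\|F\|^2 = 2\langle \bar\nabla_X F,F\rangle = 2\tau(X)\|F\|^2,
\]
so on $U$ we have $\tau = \tfrac12\, d\log\|F\|^2$; being exact, $\tau$ is closed on $U$, hence $d\tau \equiv 0$ there, and the identity from the first step gives $\bar R\cdot F = 0$ on $U$. Outside $U$ the section $F$ itself vanishes, so the same identity forces $\bar R(X,Y) F\,V = d\tau(X,Y)\, FV = 0$ trivially. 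Combining, $\bar R\cdot F \equiv 0$ on all of $M$, which is precisely semi-parallelism.

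No step is genuinely hard; the only point worth flagging is that the metric-compatibility argument only controls $d\tau$ on the locus where $F\neq 0$, but the pointwise identity $\bar R(X,Y) F = d\tau(X,Y) F$ takes care of the zero set automatically, so no regularity or continuity issue arises at the boundary of $U$. Connectedness of $M$ plays no essential role.
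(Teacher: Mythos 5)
Your proof is correct, but it is organized differently from the paper's. You compute the curvature obstruction explicitly: from $\bar\nabla_X F=\tau(X)F$ you derive the global identity $\bar R(X,Y)F=d\tau(X,Y)\,F$, and then kill $d\tau$ on the set $U=\{F\neq0\}$ by observing that metric compatibility forces $\tau=\tfrac12\,d\log\|F\|^2$ there, so $\tau$ is exact and hence closed on $U$. The paper instead normalizes: it passes to the unit section $E=\|F\|^{-1}F$, shows that the recurrence form of $E$ is $\lambda=\tau-\|F\|^{-1}d\|F\|$, and uses $0=d\langle E,E\rangle=2\lambda$ to conclude that $E$ is parallel, so the line bundle $\mathbb R\otimes F$ is flat and $\bar R\cdot F=0$ on $U$; it then invokes ``a standard topological argument'' to extend to all of $M$. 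The two arguments rest on the same mechanism (compatibility of $\bar\nabla$ with the fiber metric shows that, on the support of $F$, the recurrence form is the logarithmic derivative of $\|F\|$ and therefore carries no curvature), but your version buys a cleaner treatment of the zero locus: since $\bar R(X,Y)F=d\tau(X,Y)F$ holds pointwise everywhere and $\bar R\cdot F$ is tensorial in $F$, the vanishing on $M\setminus U$ is immediate and no continuity or connectedness argument is needed. What the paper's normalization buys in exchange is the geometric statement it records after the lemma, namely that the line subbundle spanned by a nonvanishing recurrent section is a flat subbundle of $(Hom(\mathcal E_1,\mathcal E_2),\bar\nabla)$, which is less visible in your formulation.
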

\begin{proof}
Suppose $F$ is recurrent, that is, $\bar\nabla F=F\otimes\tau$, for some $\tau\in\Omega^1(M)$.
It is trivial if $F=0$. Suppose that $\mu:=||F||\neq0$ on an open set $ U\subset M$.
Then the line bundle $\mathbb R\otimes F\rightarrow U$, spanned by $F$, is a parallel subbundle of 
$Hom(\mathcal E_1,\mathcal E_2)_{|U}$.
Consider the unit section $E:=\mu^{-1}F$ of $\mathbb R\otimes F$. Then
\begin{align*}
\bar\nabla E=\mu^{-1}\bar\nabla F+F\otimes d(\mu^{-1})
=F\otimes(\mu^{-1}\tau+d(\mu^{-1}))=E\otimes(\tau-\mu^{-1}d\mu).
\end{align*}
Hence, $E$ is also recurrent and $\bar\nabla E=E\otimes\lambda$, where $\lambda=\tau-\mu^{-1}d\mu\in\Omega^1(U)$. It follows that
\[
0=d\langle E,E\rangle=2\langle\bar\nabla E,E\rangle=2\langle E,E\rangle \lambda=2\lambda.
\]
Hence $E$ is a flat section.
%\
%\bar R\cdot E=\bar\nabla\circ\bar\nabla E=\bar\nabla E\otimes\lambda=E\otimes(\lambda\wedge\lambda)=0.
%\]
This implies that $\mathbb R\otimes F$ is a flat bundle. Hence, $\bar R\cdot F=0$
 on $ U$. By a standard topological argument, we conclude that $\bar R\cdot F=0$ on $M$.
\end{proof}

Geometrically, Lemma~\ref{lem:recurrent} tells us that the line subbundle of $(Hom(\mathcal E_1,\mathcal E_2), \bar\nabla)$, spanned by a nonvanishing recurrent section is a flat bundle.

A submanifold $M$ of a Riemannian manifold $\hat M$ is said to be \emph{recurrent} if its second fundamental form $h$ is recurrent. 
Since every $T_xM^\perp$-valued bilinear map on $T_xM$ naturally induces a homomorphism from $T_xM\otimes T_xM$ to $T_xM^\perp$,
$x\in M$,
we may identify $h$ as a section of $Hom(TM\otimes TM,TM^\perp)$. Accordingly, 
the following result can be obtained immediately from Corollary~\ref{cor:semi-parallel} and Lemma~\ref{lem:recurrent}.

\begin{corollary}\label{cor:recurrent}
There does not exist any proper recurrent CR-submanifold $M$ in $\hat M_n(c)$, $c\neq0$, with 
$\dim_{\mathbb C}\mathcal D\geq2$.
\end{corollary}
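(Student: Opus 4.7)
The plan is to combine Lemma~\ref{lem:recurrent} with Corollary~\ref{cor:semi-parallel}, so the work is essentially just to set up the bundle framework so Lemma~\ref{lem:recurrent} directly applies to the second fundamental form. First, I would take $\mathcal E_1 = TM\otimes TM$ equipped with the natural tensor-product connection induced by the Levi-Civita connection $\nabla$, and $\mathcal E_2 = TM^\perp$ with the normal connection $\nabla^\perp$. Under the canonical identification of symmetric bilinear maps with elements of $\mathrm{Hom}(TM\otimes TM, TM^\perp)$, the second fundamental form $h$ becomes a section of this Hom bundle, and a direct check shows that the connection $\bar\nabla$ on $\mathrm{Hom}(\mathcal E_1,\mathcal E_2)$ defined in Section~5 coincides, on $h$, with the van der Waerden–Bortolotti connection already in use. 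Under this identification, the statement $(\bar\nabla_X h)(Y,Z) = \tau(X)\, h(Y,Z)$ becomes exactly the recurrence condition $\bar\nabla h = h\otimes \tau$ in the sense of Section~5.

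Given this identification, suppose for contradiction that $M$ is a proper recurrent CR-submanifold in $\hat M_n(c)$, $c\neq 0$, with $\dim_{\mathbb C}\mathcal D\geq 2$. Then $h$, viewed as above, is a recurrent section, so Lemma~\ref{lem:recurrent} yields $\bar R\cdot h = 0$, which is precisely the semi-parallel condition for $M$. However, Corollary~\ref{cor:semi-parallel} asserts that no such semi-parallel proper CR-submanifold exists in $\hat M_n(c)$, $c\neq 0$, with $\dim_{\mathbb C}\mathcal D\geq 2$, a contradiction. Hence no proper recurrent CR-submanifold of this type can exist.

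The only genuinely delicate point is the bundle-theoretic identification in the first step: one must verify that the induced connection on $\mathrm{Hom}(TM\otimes TM, TM^\perp)$ restricted to sections coming from symmetric $TM^\perp$-valued bilinear forms agrees with $\bar\nabla$ as used throughout the paper, so that Lemma~\ref{lem:recurrent}'s hypothesis matches the geometric notion of recurrence of $h$. Once this bookkeeping is in place, the corollary follows in a single line from Lemma~\ref{lem:recurrent} and Corollary~\ref{cor:semi-parallel}, with no further computation required.
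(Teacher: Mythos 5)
Your proposal is correct and follows exactly the paper's own route: identify $h$ as a section of $\mathrm{Hom}(TM\otimes TM, TM^\perp)$ so that the geometric recurrence condition becomes recurrence in the sense of Section~5, apply Lemma~\ref{lem:recurrent} to conclude $\bar R\cdot h=0$, and then contradict Corollary~\ref{cor:semi-parallel}. The bookkeeping point you flag about the induced connection agreeing with the van der Waerden--Bortolotti connection is precisely the identification the paper makes implicitly.
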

\begin{remark}
The above corollary generalizes the non-existence of recurrent real hypersurfaces in a non-flat complex space form
(cf. \cite{hamada}, \cite{lyu-suh}).
\end{remark}

\section*{Acknowledgement}
The authors are thankful to the referee for several valuable comments and suggestions towards the improvement of the present article.
%\end{Acknowledgement}

% BibTeX users please use one of
%\bibliographystyle{spbasic}      % basic style, author-year citations
%\bibliographystyle{spmpsci}      % mathematics and physical sciences
%\bibliographystyle{spphys}       % APS-like style for physics
%\bibliography{}   % name your BibTeX data base

% Non-BibTeX users please use

\end{document}